\newtheorem{prop}{Proposition}
\newtheorem{cons}{Consequence}[section]
\theoremstyle{definition} \newtheorem{defin}{Definition}[section]
\theoremstyle{remark} \newtheorem{rem}{Remark}[section]
\newcommand{\pn}{\par\noindent} \newcommand{\pmn}{\par\medskip\noindent}
\begin{document}
\title{A dynamical system in the space of convex quadrangles}
\author{Yury Kochetkov}
\date{}
\begin{abstract} Let us consider a family $F(\alpha,\beta,\gamma,\delta)$ of
convex quadrangles in the plane with given angles
$\{\alpha,\beta,\gamma,\delta\}$ and with the perimeter $2\pi$.
Such quadrangle $Q\in F(\alpha,\beta,\gamma,\delta)$ can be
considered as a point $(x_1,x_2,x_3,x_4)\in\mathbb{R}^4$, where
$\{x_1,x_2,x_3,x_4\}$ ---  lengths of edges. Then to $F$ a finite
open segment $I\subset\mathbb{R}^4$ is corresponded. A quadrangle
in $F$, that corresponds to the midpoint of $I$ is called a
\emph{balanced quadrangle}. Let $M$ be the set of balanced
quadrangles. The function $f:M\to M$ is defined in the following
way:  angles of the balanced quadrangle $Q'$, $Q'=f(Q)$, are
numerically equal to edges of $Q$. The map $f$ defines a dynamical
system in the space of balanced quadrangles. In this work we study
properties of this system.
\end{abstract}

\email{yukochetkov@hse.ru, yuyukochetkov@gmail.com} \maketitle

\section{Introduction}
\pn In this work we attempt to construct some kind of a duality in
the space of convex quadrangles. Unlike the construction of works
[1] and [2] we, as in the work [3], study the "duality" between
angles and edges. Precisely, given a convex quadrangle of the
perimeter $2\pi$ we can consider a new convex quadrangle of the
perimeter $2\pi$ with angles numerically equal to lengths of edges
of the initial one. However, the new quadrangle is not uniquely
defined. To achieve the uniqueness we will introduce the notion of
a "balanced quadrangle" and in what follows we will work precisely
with them. \pmn Let $ABCD$
\[\begin{picture}(120,125) \put(10,15){\line(1,0){80}}
\put(10,15){\line(2,5){40}} \put(90,15){\line(1,3){20}}
\put(50,115){\line(3,-2){60}} \put(7,6){\scriptsize D}
\put(90,6){\scriptsize A} \put(114,73){\scriptsize B}
\put(41,115){\scriptsize C} \end{picture}\] be a convex quadrangle
with given angles: $\angle A=\alpha$, $\angle B=\beta$, $\angle
C=\gamma$ and $\angle D=\delta$. Let $x_1$, $x_2$, $x_3$ and
$x_4$, $x_1+x_2+x_3+x_4=2\pi$, be lengths of edges $DA$, $AB$,
$BC$ and $CD$, respectively. To such quadrangle we correspond a
point with coordinates $(x_1,x_2,x_3,x_4)$ in the 4-dimensional
space $\mathbb{R}^4$ and to the family
$F(\alpha,\beta,\gamma,\delta)$ of such quadrangles --- an open
interval $I\subset\mathbb{R}^4$.
\begin{defin} The quadrangle $Q\in F(\alpha,\beta,\gamma, \delta)$
to which the midpoint of $I$ is corresponded is called  a
\emph{balanced quadrangle}.\end{defin} \pn Let $M$ be the set of
balanced quadrangles and let $f$ be the map $f:M\to M$, defined in
the following way.
\begin{defin} The map $f$ corresponds to a balanced quadrangle $Q$
the balanced quadrangle $Q'$ such, that angles of $Q'$ (in the
clockwise order) are numerically equal to the lengths of edges of
$Q$ (also in the clockwise order). \end{defin} \pn Iterations of
the map $f$ have the following properties.
\begin{itemize} \item In the generic case iterations of a quadrangle $Q$
converge to the 2-cycle $(Q_1\leftrightarrow Q_2)$, where balanced
quadrangles $Q_1$ and $Q_2$ are equal up to mirror symmetry
(angles of $Q_1$ see in Section 4). This statement has an
empirical status. \item Two cases of a special behavior are
considered in Section 3: a) the square is a stable repelling
point; b) iterations of a quadrangle with edges
$(\alpha,\beta,\alpha,\gamma)$ converges to an attracting 2-cycle.
\end{itemize}

\section{Explicit formulas}
\pn Let $\alpha,\beta,\gamma,\delta$ --- angles of a convex quadrangle
enumerated in the counter-clockwise order. Consider two pairs of
angles --- $\{\delta,\alpha\}$ and $\{\beta,\gamma\}$. In one pair sum
of angles is less, than $\pi$, in other --- greater, than $\pi$. The
same is true about pairs $\{\alpha,\beta\}$ and $\{\gamma,\delta\}$.
Let $\alpha+\delta<\pi$ and $\gamma+\delta<\pi$. Let us consider the set
$F(\alpha,\beta,\gamma,\delta)$ and the corresponding interval $I$. Two
endpoints of $I$ correspond to triangles --- "degenerate" quadrangles.
\[\begin{picture}(400,110) \put(5,15){\line(1,0){105}}
\put(5,15){\line(1,6){15}} \put(20,105){\line(2,-1){60}}
\put(80,75){\line(1,-2){30}} \put(10,18){\small $\delta$}
\put(98,18){\small $\alpha$} \put(74,67){\small $\beta$}
\put(22,95){\small $\gamma$} \put(2,5){\scriptsize D}
\put(109,5){\scriptsize A} \put(82,77){\scriptsize B}
\put(14,106){\scriptsize C}

\put(145,15){\line(1,0){105}} \put(145,15){\line(1,6){15}}
\put(160,105){\line(2,-1){60}} \put(220,75){\line(1,-2){30}}
\put(150,18){\small $\delta$} \put(238,18){\small $\alpha$}
\put(214,67){\small $\beta$} \put(193,18){\small $\alpha$}
\put(142,5){\scriptsize D} \put(249,5){\scriptsize A}
\put(222,77){\scriptsize B} \put(154,106){\scriptsize C}
\qbezier[50](160,105)(182,60)(205,15) \put(203,5){\scriptsize A'}

\put(285,15){\line(1,0){105}} \put(285,15){\line(1,6){15}}
\put(300,105){\line(2,-1){60}} \put(360,75){\line(1,-2){30}}
\put(290,18){\small $\delta$} \put(378,18){\small $\alpha$}
\put(299,77){\small $\delta$} \put(302,95){\small $\gamma$}
\put(282,5){\scriptsize D} \put(389,5){\scriptsize A}
\put(362,77){\scriptsize B} \put(294,106){\scriptsize C}
\qbezier[40](295,75)(327,75)(360,75) \put(285,73){\scriptsize D'}
\end{picture}\] In the left figure above a convex quadrangle is presented
with angles $\alpha,\beta,\gamma$ and $\delta$. Lengths of edges
$DA$, $AB$, $BC$ and $CD$ are $x_1$, $x_2$, $x_3$ and $x_4$,
respectively, $x_1+x_2+x_3+x_4=2\pi$. In the middle and right
figures triangles $A'CD$ and $BCD'$ of the perimeter $2\pi$ are
presented --- two results of the "degeneration" of $ABCD$, when
$x_3=0$ in the middle figure and when $x_2=0$ --- in the right.
\pmn Lengths of edges $x_1',x_2',x_3',x_4'$ of the "degenerated
quadrangle" $A'CD$ are:
$$x_1'=\frac{2\pi\cdot\sin(\alpha+\delta)}{\sin(\alpha)+\sin(\delta)+
\sin(\alpha+\delta)}\,,\,x_2'=\frac{2\pi\cdot\sin(\delta)}
{\sin(\alpha)+\sin(\delta)+\sin(\alpha+\delta)}\,,\, x_3'=0,\,
x_4'=\frac{2\pi\cdot\sin(\alpha)}{\sin(\alpha)+\sin(\delta)+
\sin(\alpha+\delta)}\,.$$ Lengths of edges $x_1'',x_2'',x_3'',x_4''$
of the "degenerated quadrangle" $BCD'$ are:
$$x_1''=\frac{2\pi\cdot\sin(\gamma)}{\sin(\gamma)+\sin(\delta)+
\sin(\gamma+\delta)}\,,\,x_2''=0,\,x_3''=\frac{2\pi\cdot\sin(\delta)}
{\sin(\gamma)+\sin(\delta)+\sin(\gamma+\delta)}\,,\,
x_4'=\frac{2\pi\cdot\sin(\gamma+\delta)}{\sin(\gamma)+\sin(\delta)+
\sin(\gamma+\delta)}\,.$$ Now lengths of edges of the balanced quadrangle
with angles $\alpha,\beta,\gamma,\delta$ are:
$$x_1=\frac{x_1'+x_1''}{2}\,,\,x_2=\frac{x_2'+x_2''}{2}\,,\,
x_3=\frac{x_3'+x_3''}{2}\,,\,x_4=\frac{x_4'+x_4''}{2}\,.\eqno(1)$$
\begin{prop} Let $\varphi>0$, $\psi>0$ $\varphi+\psi<\pi$. Then
$$\frac{\sin(\varphi)}{\sin(\varphi)+\sin(\psi)+\sin(\varphi+\psi)}
<\frac
12\quad\text{and}\quad\frac{\sin(\varphi+\psi)}{\sin(\varphi)+
\sin(\psi)+\sin(\varphi+\psi)}<\frac 12\,.$$ \end{prop}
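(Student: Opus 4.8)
The plan is to clear the (strictly positive) common denominator $S=\sin\varphi+\sin\psi+\sin(\varphi+\psi)$; it is positive because the hypotheses $\varphi>0$, $\psi>0$, $\varphi+\psi<\pi$ force all three numbers $\varphi,\psi,\varphi+\psi$ to lie in $(0,\pi)$, where sine is positive. The first claimed inequality then reads $2\sin\varphi<S$, that is $\sin\varphi<\sin\psi+\sin(\varphi+\psi)$, and the second reads $2\sin(\varphi+\psi)<S$, that is $\sin(\varphi+\psi)<\sin\varphi+\sin\psi$. So it suffices to prove these two reduced inequalities.

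The cleanest route I would take is to recognize the denominator as proportional to the perimeter of an auxiliary triangle. Indeed, since $\varphi,\psi,\pi-\varphi-\psi$ are three positive angles summing to $\pi$, there is a triangle with these angles, and by the law of sines its side lengths are proportional to $\sin\varphi$, $\sin\psi$ and $\sin(\pi-\varphi-\psi)=\sin(\varphi+\psi)$. Under this identification each of the three fractions built from $S$ is the ratio of one side to the perimeter, and the assertion ``ratio less than $1/2$'' is exactly the statement that each side is strictly shorter than the sum of the other two, i.e. the strict triangle inequality. This disposes of both claimed inequalities at once, and of the third analogous one ($\sin\psi<\sin\varphi+\sin(\varphi+\psi)$) as well.

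If a purely trigonometric argument is preferred, I would instead prove the two reduced inequalities directly, mainly via the identity $\sin b-\sin a=2\cos\frac{a+b}{2}\sin\frac{b-a}{2}$. The second inequality is the easy one: expanding $\sin(\varphi+\psi)=\sin\varphi\cos\psi+\cos\varphi\sin\psi$ and using $\cos\varphi<1$, $\cos\psi<1$ together with $\sin\varphi,\sin\psi>0$ gives it immediately. The first is slightly more delicate, because sine is not monotone on $(0,\pi)$: after rewriting it (through sum-to-product) as $\sin\frac{\varphi}{2}<\sin\bigl(\frac{\varphi}{2}+\psi\bigr)$ and dividing by $\cos\frac{\varphi}{2}>0$, I would apply the difference formula with $a=\frac{\varphi}{2}$ and $b=\frac{\varphi}{2}+\psi$, noting that $a+b=\varphi+\psi<\pi$ makes $\cos\frac{a+b}{2}>0$ while $0<b-a=\psi<\pi$ makes $\sin\frac{b-a}{2}>0$, so the difference is positive. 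The only real point to watch is precisely this non-monotonicity of sine, which is why the hypothesis $\varphi+\psi<\pi$ (and not merely positivity of the angles) is genuinely needed; everything else is routine.
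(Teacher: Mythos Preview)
Your proposal is correct. Both of your routes work, and neither coincides with the paper's argument.

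The paper treats the two inequalities separately. For the second one it uses the sum-to-product identities to rewrite the fraction as $\cos\frac{\varphi+\psi}{2}\big/\bigl(\cos\frac{\varphi+\psi}{2}+\cos\frac{\varphi-\psi}{2}\bigr)$ and then observes that $\bigl|\frac{\varphi-\psi}{2}\bigr|<\frac{\varphi+\psi}{2}$ forces the second cosine to dominate. For the first inequality the paper fixes $\varphi$ and analyses the denominator as a function of $\psi$ via its derivative, showing that the fraction attains its supremum $\frac12$ only at the excluded endpoints $\psi=0$ and $\psi=\pi-\varphi$. Your law-of-sines observation is more conceptual: it identifies the three sines as side lengths of the triangle with angles $\varphi,\psi,\pi-\varphi-\psi$, whence both inequalities (and the third, for $\sin\psi$) drop out simultaneously from the strict triangle inequality. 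This is shorter, avoids calculus, and ties in naturally with the geometric setting of the paper, where these very fractions arise as side-to-perimeter ratios of the degenerate triangles. Your alternative purely trigonometric argument is also fine and is closer in spirit to the paper's, though your treatment of the first inequality (reducing to $\sin\frac{\varphi}{2}<\sin(\frac{\varphi}{2}+\psi)$ and applying the difference formula) is more direct than the paper's endpoint-maximum argument.
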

\begin{proof} We have that
$$\frac{\sin(\varphi+\psi)}{\sin(\varphi)+
\sin(\psi)+\sin(\varphi+\psi)}=\dfrac{\cos\frac{\varphi+\psi}{2}}
{\cos\frac{\varphi+\psi}{2}+\cos\frac{\varphi-\psi}{2}}\,.$$ As
$\left|\frac{\varphi-\psi}{2}\right|<\frac{\varphi+\psi}{2}$ then
$$\cos\frac{\varphi-\psi}{2}>\cos\frac{\varphi+\psi}{2}$$
and the second inequality is proved. \pmn Now let us consider
the fraction
$$\frac{\sin(\varphi)}{\sin(\varphi)+\sin(\psi)+\sin(\varphi+\psi)}$$
The derivative of the denominator
$$(\sin(\varphi)+\sin(\psi)+
\sin(\varphi+\psi))'_\psi=2\cdot\cos\frac\varphi2\cdot
\cos\frac{2\psi+\varphi}{2}$$ is positive in the interval $0<\psi<
\frac{\pi-\varphi}{2}$ and is negative in the interval
$\frac{\pi-\varphi}{2}<\psi<\pi-\varphi$. Hence, the fraction is
maximal at points $0$ and $\pi-\varphi$. But here the value of the
fraction is $\frac 12$. \end{proof}
\begin{cons} A balanced quadrangle has two adjacent edges of the
length $\leqslant\frac\pi2$. \end{cons} \begin{proof} Indeed, these are
edges $AB$ and $BC$. \end{proof}  \begin{cons} A convex quadrangle of the
perimeter $2\pi$ with three edges with the lengths $>\frac\pi2$ cannot be
balanced.\end{cons}

\section{The exceptional behavior}
\pn If a balanced quadrangle $Q$ has two equal opposite edges,
then from (1) it follows, that $Q'$ has edges $\{a,a,b,b\}$,
$a<b$, $b=\pi-a$. It means, that $Q''$ is a trapezoid with equal
side edges. The edges of $Q''$ have lengths
$$\left(\frac{\pi}{2+2\cos(a)}\,,\,\frac\pi2\,,\,\frac{\pi}{2+2\cos(a)}\,,\,
\frac\pi2+\frac{\pi\cdot\cos(a)}{1+\cos(a)}\right)\,.$$ Thus,
$Q''$ is of the same type, as $Q$. So, $f$ maps trapezoids to
quadrangles with equal opposite angles and them to trapezoids:
\[\begin{picture}(170,50) \put(0,5){\line(1,0){60}}
\put(0,5){\line(1,2){20}} \put(20,45){\line(1,0){20}}
\put(40,45){\line(1,-2){20}} \put(85,20){$\Leftrightarrow$}
\put(130,5){\line(1,0){40}} \put(130,5){\line(0,1){40}}
\put(130,45){\line(3,-1){30}} \put(160,35){\line(1,-3){10}}
\end{picture}\] A trapezoid with angles $(a,a,b,b)$ after double
iteration of $f$ becomes a trapezoid with angles $(c,c,d,d)$,
$c<d$, where
$$c=\dfrac{\pi}{1+\sin\frac{\pi}{2+2\cos(a)}+\cos\frac{\pi}{2+2\cos(a)}}\,.$$
The convex function $c(a)$, $0<a<\frac\pi2$, increases from the value
$c(0)=\frac{\pi}{\sqrt 2+1}\approx 1.3$ to the value $c(\frac\pi2)=
\frac\pi2$. Its plot in the square $[1.4,\frac\pi2]\times [1.4,\frac\pi2]$
is presented below (the scaling is not preserved):
\[\begin{picture}(115,120) \put(0,10){\vector(1,0){115}}
\put(5,5){\vector(0,1){115}} \qbezier[70](5,10)(55,60)(105,110)
\put(114,3){\scriptsize a} \put(0,116){\scriptsize c}
\qbezier(5,40)(70,60)(105,110) \end{picture}\] The curve $c(a)$
and the line $c=a$ intersects in two points: $a\approx
1.48342158769377952440379165224$ and $a=\frac\pi2$. The first point
defines an attracting 2-cycle (the derivative $c'(a)$ at this point
$\approx 0.8$). The point $a=\frac\pi2$ is a repelling stationary point ---
the square. Below are presented angles of quadrangles in the
attracting 2-cycle:
$$(1.48342\ldots,1.48342\ldots,1.65817\ldots,1.65817\ldots)
\leftrightarrow (1.44472\ldots,\frac\pi2,1.44472\ldots,
\frac\pi2+0.25214\ldots)\,$$

\section{The general case}
\pn Computations demonstrate that iterations of a generic balanced
quadrangle converge to an attracting 2-cycle $Q_1\leftrightarrow
Q_2$, where balanced quadrangles $Q_1$ and $Q_2$ are congruent up
to mirror symmetry. Angles $\alpha$, $\gamma$ and $\delta$ of
$Q_1$ are defined by relations
$$\begin{array}{l}
\alpha=\frac{\pi\cdot\sin(\alpha+\delta)}{\sin(\alpha)+
\sin(\delta)+\sin(\alpha+\delta)}+\frac{\pi\cdot\sin(\gamma)}
{\sin(\gamma)+\sin(\delta)+\sin(\gamma+\delta)}\\ \delta=
\frac{\pi\cdot\sin(\delta)}{\sin(\alpha)+\sin(\delta)+
\sin(\alpha+\delta)}\\ \gamma=\frac{\pi\cdot\sin(\delta)}
{\sin(\gamma)+\sin(\delta)+\sin(\gamma+\delta)}\end{array}\eqno(2)$$
and
$$\begin{array}{l}\alpha\approx 1.54819305248669225152933985324\\
\beta\approx 1.82405188512759300508614890573\\
\gamma\approx 1.41515953031350909799654144250\\
\delta\approx 1.49578083925179212231325656509\\
\end{array}\eqno(3)$$ It must be noted that modules of some
derivatives of righthand parts of relations (2) at the point (3)
are greater, than $1$. Thus, the attracting property of this
2-cycle does not have a simple explanation.
\begin{rem} Let generic quadrangles $R$ and $S$ be congruent up to mirror
symmetry. As expected, if $f^{(n)}(R)$ is close to $Q_1$,
$f^{(n)}(S)$ is close to $Q_2$, and vice versa.
\end{rem}

\vspace{1cm}
\end{document}